\newtheorem{theorem}{Theorem}
\newtheorem{cor}[theorem]{Corollary}
\newtheorem{prop}[theorem]{Proposition}
\newtheorem{lemma}[theorem]{Lemma}
\newtheorem{example}[theorem]{Example}
\newcommand{\lpk}{{\rm lpk\,}}
\newcommand{\des}{{\rm des\,}}
\newcommand{\rbb}{\mathcal{RB}}
\newcommand{\msn}{\mathfrak{S}_n}
\newcommand{\rs}{\mathcal{RS}}
\newcommand{\rd}{\mathcal{RD}}
\newcommand{\rt}{\mathcal{RT}}
\newcommand{\lrf}[1]{\lfloor #1\rfloor}
\newcommand{\lrc}[1]{\lceil #1\rceil}
\title{The descent statistic on signed simsun permutations}
\author[S.-M.~Ma]{Shi-Mei Ma}
\address{School of Mathematics and Statistics,
        Northeastern University at Qinhuangdao,
         Hebei 066004, P.R. China}
\email{shimeimapapers@163.com (S.-M. Ma)}
\author[T. Mansour]{Toufik Mansour}
\address{D Department of Mathematics, University of Haifa, 3498838 Haifa, Israel}
\email{tmansour@univ.haifa.ac.il}
\author[H.-N.~Wang]{Hai-Na Wang}
\address{Department of Mathematics, Northeastern University, Shenyang, 110004, China}
\email{hainawangpapers@163.com}
\subjclass[2010]{Primary 05A15; Secondary 05A19}
\begin{document}

\maketitle
\begin{abstract}
In this paper we study the generating polynomials obtained by enumerating signed simsun permutations by number of
the descents. Properties of the polynomials, including the recurrence relations and generating functions are studied.
\bigskip\\
{\sl Keywords:} Signed simsun permutations; Even-signed simsun permutations; Descents
\end{abstract}
\date{\today}
\section{Introduction}
Let $\msn$ denote the symmetric group of all permutations of $[n]$, where $[n]=\{1,2,\ldots,n\}$.
Let $\pi=\pi(1)\pi(2)\cdots \pi(n)\in\msn$.
A {\it descent} in $\pi$ is an element $\pi(i)$ such that $\pi(i)>\pi(i+1)$, where $i\in [n-1]$.
%
We say that $\pi\in\msn$ has no {\it double descents} if there is no
index $i\in [n-2]$ such that $\pi(i)>\pi(i+1)>\pi(i+2)$.
A permutation $\pi\in\msn$ is called {\it simsun} if for all $k$, the
subword of $\pi$ restricted to $[k]$ (in the order
they appear in $\pi$) contains no double descents. For example,
$35142$ is simsun, but $35241$ is not. Let $\rs_n$ be the set of simsun permutations of length $n$.
Let $|C|$ denote the cardinality of a set
$C$. Simion and Sundaram~\cite[p.~267]{Sundaram1994} discovered that
$|\rs_n|=E_{n+1}$, where $E_n$ is the $n$th Euler number, which also is the number
alternating permutations in $\msn$ (see~\cite{Stanley} for instance).
Simsun permutations are a variant of the Andr\'e permutations of Foata and Sch\"utzenberger~\cite{Foata73}.
We refer the reader to~\cite{Branden11,Chow11,Deutsch12,Hetyei98,Ma16} for some recent results related to simsun permutations.

There have been extensive studies of the descent polynomials for simsun permutations (see~\cite{Chow11,Ma16} for instance).
Let
$S_n(x)=\sum_{\pi\in\rs_n}x^{\des_A(\pi)}$,
where $$\des_A(\pi)=|\{i\in [n-1]: \pi(i)>\pi(i+1)\}|.$$
It follows from~\cite[Theorem 3.2]{Sundaram1994} that the polynomials $S_n(x)$ satisfy the recurrence relation
\begin{equation*}\label{Snx-recu}
S_{n+1}(x)=(1+nx)S_n(x)+x(1-2x)S_n'(x),
\end{equation*}
with $S_0(x)=1$. Let $RS(x,z)=\sum_{n\geq 0}S_n(x)\frac{z^n}{n!}$.
Chow and Shiu~\cite[Theorem~2.1]{Chow11} obtained that
\begin{equation*}\label{Sxz-expon}
RS(x,z)=\left(\frac{\sqrt{2x-1}\sec\left(\frac{z}{2}\sqrt{2x-1}\right)}
{\sqrt{2x-1}-\tan\left(\frac{z}{2}\sqrt{2x-1}\right)}\right)^2.
\end{equation*}
Recall that the classical {\it Eulerian polynomials} of type $A$ are defined by
$A_n(x)=\sum_{\pi\in\msn}x^{\des_A(\pi)}$.
From~\cite[Proposition~2.7]{Foata73}, we have
\begin{equation*}\label{AnxSnk}
S_n\left(\frac{2x}{(1+x)^2}\right)=\frac{A_{n+1}(x)}{(1+x)^n}.
\end{equation*}
A {\it left peak} in $\pi$ is an index $i\in[n-1]$ such that $\pi(i-1)<\pi(i)>\pi(i+1)$, where we take $\pi(0)=0$.
Let $\lpk(\pi)$ denote the number of
left peaks in $\pi$. For example, $\lpk(21435)=2$.
It is clear that every descent of a simsun permutation is a left peak. Hence $S_n(x)=\sum_{\pi\in\rs_n}x^{\lpk(\pi)}$.
Let
$\widehat{W}_n(x)=\sum_{\pi\in\msn}x^{\lpk(\pi)}$.
There is a close connection between $S_n(x)$ and $\widehat{W}_n(x)$ (see~\cite[Eq.~(6)]{Ma16}):
\begin{equation*}\label{SnxWnx}
S_n(x)=\frac{1}{2^n}\sum_{k=0}^n\binom{n}{k}\widehat{W}_k(2x)\widehat{W}_{n-k}(2x).
\end{equation*}

It is now well known that simsun permutations and
signed simsun permutations are useful in computing the $cd$-index of the Boolean algebra and the cubical lattice, respectively  (see~\cite{Billera97,Ehrenborg98}).
Let $B_n$ be the hyperoctahedral group of rank $n$. Elements $\pi$ of $B_n$ are signed permutations of the set $\pm[n]$ such that $\pi(-i)=-\pi(i)$ for all $i$, where $\pm[n]=\{\pm1,\pm2,\ldots,\pm n\}$.
Throughout this paper, we always identify a signed permutation $\pi=\pi(1)\cdots\pi(n)$ with the word $\pi(0)\pi(1)\cdots\pi(n)$, where $\pi(0)=0$.
A double descent of $\pi\in B_n$ is an index $i\in [n-1]$ such that
$\pi(i-1)>\pi(i)>\pi(i+1)$.
Let $R(\pm[k])$ be the set of signed permutations of $B_k$ with no double descents.
Following~\cite[Section 7]{Ehrenborg98},
a {\it signed simsun permutation} $\pi$ of length $n$ is a permutation of $B_n$
such that for all $0\leq k\leq n$,
if we remove the $k$ entries $\pm n, \pm (n-1),\ldots, \pm (n-k+1)$ from $\pi$,
the resulting permutation belongs to $R(\pm [n-k])$.
Let $\rbb_n$ denote the set of signed simsun permutations of $B_n$.
For example, $01(-3)2(-5)4$ is a signed simsun permutations, while $\pi'=01(-3)2(-6)(-4)(-5)$ is not, since when we remove $\pm6$ from $\pi'$, the resulting permutation $01(-3)2(-4)(-5)$ contains a double descent. In particular,
$$\rbb_2=\{012,01(-2),021,0(-2)1,0(-1)2,02(-1),0(-2)(-1)\}.$$

Denote by $D_n$ the set of even-signed permutations of $B_n$, i.e., for any $\pi\in D_n$, the set
$\{\pi(1),\pi(2),\ldots,\pi(n)\}$ contains an even number of negative terms.
For $n\geq 2$, $D_n$ forms a normal subgroup of $B_n$ of index 2.
Set $T_n=B_n\setminus D_n$.
Following~\cite{Bre94}, the descent statistic of type $B$ is defined by
$$\des_B(\pi)=|\{i\in\{0,1,2,\ldots,n-1\}:\pi(i)>\pi({i+1})\}|.$$
Let $$E(n,k)=|\{\pi\in D_n: \des_B(\pi)=k\}|,~\widetilde{E}(n,k)=|\{\pi\in T_n: \des_B(\pi)=k\}|.$$
Very recently, Borowiec and Mlotkowski~\cite{Borowiec16} studied the type $D$ Eulerian numbers $E(n,k)$ and $\widetilde{E}(n,k)$. In particular, they discovered a remarkable formula (see~\cite[Proposition 4.3]{Borowiec16}):
\begin{equation}\label{Enk}
E(n,k)-\widetilde{E}(n,k)=(-1)^k\binom{n}{k}.
\end{equation}
Let $\rd_n$ and $\rt_n$ denote the sets of simsun permutations of $D_n$ and $T_n$, respectively.
Let $$D(n,k)=|\{\pi\in \rd_n: \des_B(\pi)=k\}|,~T(n,k)=|\{\pi\in \rt_n: \des_B(\pi)=k\}|.$$
As a correspondence of~\ref{Enk}, we recently observed the following formula:
\begin{equation}\label{Dnk}
D(n,k)-T(n,k)=(-1)^k\binom{n-k+1}{k},
\end{equation}
which is implied by Theorem~\ref{thm02} of Section~\ref{Section-3}. Motivated by~\eqref{Dnk}, we shall study
the polynomials obtained by enumerating permutations of $\rbb_n,\rd_n$ and $\rt_n$ by number of the descents.
\section{On signed simsun permutations of $B_n$ }\label{Section-2}
\hspace*{\parindent}
Let $\rbb_n^{+}=\{\pi\in\rbb_n: \pi(1)>0\}$ and $\rbb_n^{-}=\{\pi\in\rbb_n: \pi(1)<0\}$.
We define
\begin{align*}
R^{+}_n(x)&=\sum_{\pi\in\rbb_n^{+}}x^{\des_B(\pi)}=\sum_{k\geq 0} R^{+}(n,k)x^k,\\
R^{-}_n(x)&=\sum_{\pi\in\rbb_n^{-}}x^{\des_B(\pi)}=\sum_{k\geq 0} R^{-}(n,k)x^k,\\
R_n(x)&=\sum_{\pi\in\rbb_n}x^{\des_B(\pi)}=\sum_{k\geq 0} R(n,k)x^k.
\end{align*}
It is clear that $R(n,k)=R^{+}(n,k)+R^{-}(n,k)$.
The following lemma is a fundamental result.
\begin{lemma}\label{lemma1}
For $n\geq 2$, we have
\begin{equation}\label{Rnk-recu01}
R^+(n,k)=2kR^+(n-1,k)+(2n-4k+2)R^+(n-1,k-1)+R(n-1,k),
\end{equation}
\begin{equation}\label{Rnk-recu02}
R^-(n,k)=2kR^-(n-1,k)+(2n-4k+3)R^-(n-1,k-1)+R(n-1,k-1),
\end{equation}
\begin{equation}\label{Rnk-recu03}
R(n,k)=(2k+1)R(n-1,k)+(2n-4k+3)R(n-1,k-1)+R^-(n-1,k-1).
\end{equation}
\end{lemma}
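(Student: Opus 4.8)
The plan is to generate every element of $\rbb_n$ from one of $\rbb_{n-1}$ by inserting the letter of largest modulus. The starting point is the structural observation that $\pi\in\rbb_n$ if and only if $\pi$ has no double descent and the word $\sigma$ obtained from $\pi$ by deleting $\pm n$ lies in $\rbb_{n-1}$: this is immediate from the defining condition, because deleting the top $j$ letters of $\pi$ (in modulus) coincides with deleting the top $j-1$ letters of $\sigma$. Hence deletion of $\pm n$ is a bijection between $\rbb_n$ and the triples $(\sigma,v,i)$ with $\sigma\in\rbb_{n-1}$, $v\in\{n,-n\}$, and $i$ a gap of $0\,\sigma(1)\cdots\sigma(n-1)$ into which $v$ can be inserted without creating a double descent. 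The whole computation reduces to counting these legal insertions while tracking $\des_B$ and the sign of the new first letter, keeping in mind that (as $\pi(0)=0$) a permutation lies in $\rbb_n^{-}$ exactly when it has a descent at position $0$.

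Next I would record two local rules. Write $p_j\in\{A,D\}$ for the ascent/descent of $\sigma$ at position $j$, so $p_0=D$ precisely when $\sigma\in\rbb_{n-1}^{-}$, and adopt the convention that positions outside $\{0,\dots,n-2\}$ are ascents. Because $n$ (resp. $-n$) always produces a peak (resp. valley) at its new slot, the only double descent it can create sits immediately to its right (resp. left). Thus inserting $n$ into gap $i$ is legal iff $p_{i+1}\neq D$, raising $\des_B$ by $1$ when $p_i=A$ and leaving it fixed when $p_i=D$ or when $i$ is the terminal gap; inserting $-n$ into gap $i$ is legal iff $p_{i-1}\neq D$, raising $\des_B$ by $1$ when $p_i=A$ or when $i$ is terminal, and leaving it fixed when $p_i=D$. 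Finally, inserting into the initial gap forces $\pi(1)=\pm n$ and so lands in $\rbb_n^{\pm}$, whereas inserting into any later gap keeps $\pi(1)=\sigma(1)$ and preserves the sign class of $\sigma$.

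With these rules I would count, for fixed $\sigma$ with $\des_B(\sigma)=d$, the legal insertions of each descent-shift/sign type, using that $\sigma$ has no two consecutive $D$'s. For $\sigma\in\rbb_{n-1}^{+}$ (so $p_0=A$), enumerating the consecutive pairs $(p_j,p_{j+1})$ (with the terminal sentinel) yields $d$ pairs $AD$, $d$ pairs $DA$, and $n-1-2d$ pairs $AA$; feeding these into the local rules shows that $\sigma$ admits $2(n-1-2d)$ legal insertions that raise $\des_B$ and stay in $\rbb_n^{+}$, and $2d+1$ that fix $\des_B$ and stay in $\rbb_n^{+}$, plus one insertion of $-n$ into the initial gap that raises $\des_B$ and enters $\rbb_n^{-}$. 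A symmetric enumeration for $\sigma\in\rbb_{n-1}^{-}$ supplies the remaining insertions, in particular the single descent-preserving insertion of $n$ into the initial gap that carries $\rbb_{n-1}^{-}$ into $\rbb_n^{+}$. Summing over $\sigma$ with $d=k-1$ for the $\des_B$-raising insertions and $d=k$ for the $\des_B$-preserving ones, then peeling a unit off the coefficient $2k+1$ (resp. $2n-4k+4$) so that it merges with the cross term $R^{-}(n-1,k)$ (resp. $R^{+}(n-1,k-1)$) into $R(n-1,k)$ (resp. $R(n-1,k-1)$) via $R=R^{+}+R^{-}$, collapses the tallies precisely into \eqref{Rnk-recu01} and \eqref{Rnk-recu02}. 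Adding these two identities and using $R(n,k)=R^{+}(n,k)+R^{-}(n,k)$ then produces \eqref{Rnk-recu03}.

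The main obstacle is the boundary bookkeeping at the two ends of the word. The terminal gap and the sentinel position $n-1$ do not obey the generic pair rule, so for the minimum $-n$, whose legality looks one step to the left, the raw counts carry an indicator $[\,p_{n-2}=A\,]$; the crux is to verify that in each affected tally this indicator is exactly cancelled by the separately treated terminal-gap insertion, leaving the clean coefficients $2k$, $2k+1$, $2n-4k+2$, and $2n-4k+3$. Confirming this cancellation, and routing each initial-gap insertion into the correct sign class, is where the care lies; the remaining tally is routine.
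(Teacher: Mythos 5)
Your proposal is correct and follows essentially the same route as the paper: both generate $\rbb_n$ from $\rbb_{n-1}$ by inserting $\pm n$, split the count by the sign class of $\sigma(1)$ and by whether the insertion raises or preserves $\des_B$, obtain the intermediate forms with coefficients $2k+1$, $2n-4k+2$, $2n-4k+4$ and cross terms $R^{\mp}(n-1,\cdot)$, and then merge terms via $R=R^++R^-$. Your treatment of the boundary gaps and the cancellation of the $[\,p_{n-2}=A\,]$ indicator is in fact more explicit than the paper's rather terse position counts, but it is the same argument.
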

\begin{proof}
Define
$$\rbb^+_{n,k}=\{\pi\in\rbb^+_n: \des(\pi)=k\},$$
$$\rbb^-_{n,k}=\{\pi\in\rbb^-_n: \des(\pi)=k\}.$$
Firstly, we prove~\eqref{Rnk-recu01}.
In order to get a permutation $\pi'\in\rbb^{+}_{n,k}$ from a permutation $\pi\in\rbb_{n-1}$,
we distinguish among the following cases:
\begin{enumerate}
\item [(i)] If $\pi\in \rbb_{n-1,k}^+$, then we can insert the entry $n$ right after a descent or insert the entry $-n$ right after a descent of $\pi$. Moreover, we can put the entry $n$ at the end of $\pi$.
This accounts for $(2k+1)R^+(n-1,k)$ possibilities.
\item [(ii)] If $\pi\in \rbb_{n-1,k-1}^+$, then we cannot insert the entry $n$ immediately before or right after each descent of $\pi$ and we cannot insert the entry $n$ at the end of $\pi$. Moreover, we cannot insert the entry $-n$ immediately before or right after each descent bottom and we also cannot insert $-n$ right after $\pi(0)$, where a descent bottom is an entry $\pi(i)$ such that $\pi(i-1)>\pi(i)$ and $1\leq i\leq n-1$. Hence there are $n-2k+1$ positions could be inserted the entry $n$ or $-n$.
    This accounts for $(2n-4k+2)R^+(n-1,k-1)$ possibilities.
\item [(iii)] If $\pi\in \rbb_{n-1,k}^-$, then we have to put $n$ right after $\pi(0)$. This accounts for $R^-(n-1,k)$ possibilities.
 \end{enumerate}
Therefore,
\begin{align*}
R^+(n,k)&=(2k+1)R^+(n-1,k)+(2n-4k+2)R^+(n-1,k-1)+R^-(n-1,k)\\
&=2kR^+(n-1,k)+(2n-4k+2)R^+(n-1,k-1)+R(n-1,k).
\end{align*}

Secondly, we prove~\eqref{Rnk-recu02}. In order to get a permutation $\pi'\in\rbb_{n,k}^{-}$ from a permutation $\pi\in\rbb_{n-1}$,
we also distinguish among the following cases:
\begin{enumerate}
\item [(i)] If $\pi\in \rbb_{n-1,k}^-$, then we can insert the entry $n$ right after any descent except the first descent or insert the entry $-n$ right after any descent. Moreover, we can insert $n$ at the end of $\pi$. This accounts for $2kR^-(n-1,k)$ possibilities.
\item [(ii)] If $\pi\in \rbb_{n-1,k-1}^-$, we cannot insert the entry $n$ immediately before or right after any descent and we cannot insert the entry $-n$ immediately before or right after each descent bottom. Moreover, we cannot put $n$ at the end of $\pi$. Note that $\pi(0)$ is a descent.
    Hence there are $n-1-1-2(k-2)=n-2k+2$ positions could be inserted the entry $n$ and there are $n-2(k-1)=n-2k+2$ positions could be inserted the entry $-n$. This accounts for $(2n-4k+4)R^-(n-1,k-1)$ possibilities.
\item [(iii)]If $\pi\in \rbb_{n-1,k-1}^+$, then we have to put the entry $-n$ right after $\pi(0)$. This accounts for $R^+(n-1,k-1)$ possibilities.
 \end{enumerate}
Therefore,
\begin{align*}
R^-(n,k)&=2kR^-(n-1,k)+(2n-4k+4)R^-(n-1,k-1)+R^+(n-1,k-1)\\
&=2kR^-(n-1,k)+(2n-4k+3)R^-(n-1,k-1)+R(n-1,k-1).
\end{align*}
Finally,
combining~\eqref{Rnk-recu01} and~\eqref{Rnk-recu02}, we immediately get~\eqref{Rnk-recu03}.
\end{proof}

So the following proposition is immediate.
\begin{prop}\label{prop1}
The polynomials $R_{n}^+(x),R_{n}^-(x)$ and $R_{n}(x)$ satisfy the following recurrence relations
\begin{align*}
R_{n+1}^+(x)&=2nxR_{n}^+(x)+2x(1-2x)\frac{d}{dx}R_{n}^+(x)+R_{n}(x),\\
R_{n+1}^-(x)&=(2n+1)xR_{n}^-(x)+2x(1-2x)\frac{d}{dx}R_{n}^-(x)+xR_{n}(x),\\
R_{n+1}(x)&=(1+(2n+1)x)R_{n}(x)+2x(1-2x)\frac{d}{dx}R_{n}(x)+xR_{n}^-(x),
\end{align*}
with the initial conditions $R_{0}^+(x)=R_0(x)=1$ and $R_{0}^-(x)=0$.
\end{prop}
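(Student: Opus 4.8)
The plan is to obtain all three polynomial identities directly from the coefficient recurrences of Lemma~\ref{lemma1}, reading each as a formal statement about generating functions in $x$. First I would replace $n$ by $n+1$ throughout~\eqref{Rnk-recu01}, \eqref{Rnk-recu02} and~\eqref{Rnk-recu03}, so that the left-hand sides become $R^+(n+1,k)$, $R^-(n+1,k)$ and $R(n+1,k)$, which are precisely the coefficients of $R_{n+1}^+(x)$, $R_{n+1}^-(x)$ and $R_{n+1}(x)$. Then I would multiply each shifted recurrence by $x^k$, sum over all $k\geq 0$, read off the polynomial on the left, and translate each term on the right into an operator applied to $R_n^+(x)$, $R_n^-(x)$ or $R_n(x)$.

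Two elementary identities drive the whole argument. The ``diagonal'' terms use $\sum_{k\geq 0} k\,a_k x^k = x\frac{d}{dx}\sum_{k\geq 0} a_k x^k$, which turns $2kR^+(n,k)$ into $2x\frac{d}{dx}R_n^+(x)$ and, in the third recurrence, turns $(2k+1)R(n,k)$ into $2x\frac{d}{dx}R_n(x)+R_n(x)$. The ``shifted'' terms, such as $\sum_k (2n-4k+4)R^+(n,k-1)x^k$, require the substitution $j=k-1$; the key observation is that after this substitution the coefficient $2n-4k+4$ becomes $2n-4j$, so the sum equals $x\sum_{j\geq0}(2n-4j)R^+(n,j)x^j = 2nx\,R_n^+(x)-4x^2\frac{d}{dx}R_n^+(x)$. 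Combining the $+2x\frac{d}{dx}$ coming from the diagonal term with the $-4x^2\frac{d}{dx}$ coming from the shifted term produces exactly the operator $2x(1-2x)\frac{d}{dx}$ appearing in all three identities. The remaining inhomogeneous terms (the last summand in each recurrence) become, after the shift and summation, $R_n(x)$, $xR_n(x)$ and $xR_n^-(x)$ respectively.

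The only point demanding care is the bookkeeping in the shifted sums: one must re-express the linear-in-$k$ coefficient in terms of the new index $j=k-1$ \emph{before} splitting it into a constant part and a part proportional to $j$, since it is this re-expression that simultaneously manufactures the multiplication-by-$x$ factor and the correct $-4x^2$ coefficient on the derivative. Once this is carried out consistently for each of~\eqref{Rnk-recu01}--\eqref{Rnk-recu03}, the three displayed identities follow by inspection. The initial conditions simply record the single (empty) signed simsun permutation of $B_0$, which contributes $1$ to $R_0^+(x)$ and to $R_0(x)$ and $0$ to $R_0^-(x)$. No genuine obstacle arises; the substance is entirely contained in Lemma~\ref{lemma1}, so the proposition is indeed immediate.
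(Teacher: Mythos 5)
Your proof is correct and matches the paper's approach: the paper simply declares Proposition~\ref{prop1} ``immediate'' from Lemma~\ref{lemma1}, and your translation of the coefficient recurrences \eqref{Rnk-recu01}--\eqref{Rnk-recu03} into polynomial identities via $\sum_k k\,a_kx^k = x\frac{d}{dx}\sum_k a_kx^k$ and the index shift $j=k-1$ is exactly the routine step being left implicit. Your bookkeeping (in particular re-expressing the linear-in-$k$ coefficients after the shift, which produces the $2x(1-2x)\frac{d}{dx}$ operator) checks out.
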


By Proposition~\ref{prop1}, it is easy to verify that
$\deg R^{+}_n(x)=\lrf{n/2}$ and $\deg R^{-}_n(x)=\deg R_n(x)=\lrc{n/2}$.
For convenience, we list the first few terms of $R^{+}_n(x),R^{-}_n(x)$ and $R_n(x)$:
\begin{align*}
R^{+}_1(x)& =1,R^{+}_2(x)=1+3x,R^{+}_3(x)=1+16x,R^{+}_4(x)=1+61x+41x^2;\\
R^{-}_1(x)& =x,R^{-}_2(x) =3x,R^{-}_3(x)=7x+9x^2,R^{-}_4(x)=15x+80x^2;\\
R_1(x)& =1+x,R_2(x) =1+6x,R_3(x)=1+23x+9x^2,R_4(x)=1+76x+121x^2.
\end{align*}

Define
\begin{align*}
R^+(x;t)&=\sum_{n\geq0}R^+_n(x)\frac{t^n}{n!};\\
R^-(x;t)&=\sum_{n\geq0}R^-_n(x)\frac{t^n}{n!};\\
R(x;t)&=\sum_{n\geq0}R_n(x)\frac{t^n}{n!}.
\end{align*}
Clearly, $R(x;t)=R^+(x;t)+R^-(x;t)$.
We can now conclude the first main result of this paper.
\begin{theorem}\label{thRR1}
Let $u(x,t)=t\sqrt{2x-1}-\arctan(\sqrt{2x-1})$. The
generating functions $R^+(x;t)$ and $R^-(x;t)$ are respectively given by
\begin{align*}
&R^+(x;t)=\frac{\sqrt{2x-1}}{2\sqrt{x}p(x)}
\left(p^2(x)F_2(u(x,t))+F_1(u(x,t))\right),\\
&R^-(x;t)=\frac{-\sqrt{2x-1}}{2p(x)}
\left(p^2(x)F_2(u(x,t))-F_1(u(x,t))\right),
\end{align*}
where
$p(x)=\left(\frac{\sqrt{2x}+1}{\sqrt{2x}-1}\right)^{\frac{\sqrt{2}}{4}}$
and
\begin{align*}
F_1(x)=\frac{-1}{\sqrt{2}\sin(x)}\left(\frac{-\sin(x)}{1+\cos(x)}\right)^{-\frac{1}{\sqrt{2}}},\quad
F_2(x)=\frac{-1}{\sqrt{2}\sin(x)}\left(\frac{-\sin(x)}{1+\cos(x)}\right)^{\frac{1}{\sqrt{2}}}.
\end{align*}
\end{theorem}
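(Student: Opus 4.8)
The plan is to convert the recurrences of Proposition~\ref{prop1} into a coupled linear first-order PDE system for the exponential generating functions, reduce it along characteristics to a single second-order ODE, and integrate. Throughout write $A=R^+(x;t)$ and $B=R^-(x;t)$, so that $R(x;t)=A+B$. Multiplying the first two recurrences of Proposition~\ref{prop1} by $t^n/n!$ and summing, and using that $\sum_n nR_n\tfrac{t^n}{n!}=t\partial_t$ while $\tfrac{d}{dx}\mapsto\partial_x$ (so the factor $(2n+1)$ contributes $2t\partial_t+1$), I obtain
\begin{align*}
(1-2xt)\partial_t A-2x(1-2x)\partial_x A&=A+B,\\
(1-2xt)\partial_t B-2x(1-2x)\partial_x B&=xA+2xB,
\end{align*}
the third recurrence being the sum of these. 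Setting $\mathcal{L}=(1-2xt)\partial_t-2x(1-2x)\partial_x$, the system reads $\mathcal{L}A=A+B$ and $\mathcal{L}B=xA+2xB$, with initial data $A|_{t=0}=R_0^+(x)=1$ and $B|_{t=0}=R_0^-(x)=0$.

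Next I would introduce the angular variable $\theta$ by $\sqrt{2x-1}=\tan\theta$, i.e. $2x=\sec^2\theta$, so that $u(x,t)=t\tan\theta-\theta$. A direct computation gives $\mathcal{L}u=0$ and $\mathcal{L}\theta=\tan\theta$; hence in the coordinates $(\theta,u)$ the operator becomes $\mathcal{L}=\tan\theta\,\partial_\theta$, and the system turns into ODEs in $\theta$ with $u$ as a parameter,
\begin{equation*}
\tan\theta\,A_\theta=A+B,\qquad \tan\theta\,B_\theta=\tfrac12\sec^2\theta\,(A+2B).
\end{equation*}
Eliminating $B=\tan\theta\,A_\theta-A$ yields the single classical equation
\begin{equation*}
A_{\theta\theta}-\cot\theta\,A_\theta+\tfrac12\csc^2\theta\,A=0 .
\end{equation*}
Substituting the trial form $A=\sin\theta\,\cot^{\lambda}(\theta/2)$ collapses the left side to $(\lambda^2-\tfrac12)\csc^2\theta\,A$, so the indicial condition is $\lambda^2=\tfrac12$ and the fundamental solutions are $\Phi_\pm(\theta)=\sin\theta\,\cot^{\pm1/\sqrt2}(\theta/2)$. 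The general solution is therefore $A=c_+(u)\Phi_+(\theta)+c_-(u)\Phi_-(\theta)$, where $c_\pm$ are to be fixed by the initial data.

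It then remains to pin down $c_\pm(u)$. Since $B=\tan\theta\,A_\theta-A$ and $\Phi_\pm'/\Phi_\pm=\cot\theta\mp\tfrac1{\sqrt2}\csc\theta$, one gets $B=c_+(u)\Psi_+(\theta)+c_-(u)\Psi_-(\theta)$ with $\Psi_\pm=\mp\tfrac1{\sqrt2}\sec\theta\,\Phi_\pm$. The slice $t=0$ is exactly the curve $u=-\theta$, so imposing $A=1$ and $B=0$ there and setting $\theta=-u$ produces a $2\times2$ linear system for $c_\pm(u)$ whose determinant equals $\bigl[\tan\theta\,W\bigr]_{\theta=-u}$, where $W=\Phi_+\Phi_-'-\Phi_-\Phi_+'=\sqrt2\,\sin\theta$ is the Wronskian. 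Solving by Cramer's rule and substituting back reconstructs $A$ and $B$ explicitly as functions of $\theta$ and $u$.

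Finally I would simplify. The half-angle identities $\tfrac{1+\cos\theta}{1-\cos\theta}=\cot^2(\theta/2)$ and $\tfrac{-\sin u}{1+\cos u}=-\tan(u/2)$ let me identify $p(x)=\cot^{1/\sqrt2}(\theta/2)$, together with $\tfrac{\sqrt{2x-1}}{2\sqrt x}=\tfrac{\sin\theta}{\sqrt2}$ and $\sqrt{2x-1}=\tan\theta$; the surviving $u$-dependent factors are precisely $F_1(u)$ and $F_2(u)$, and collecting terms should reproduce the stated formulas for $R^+(x;t)$ and $R^-(x;t)$. I expect the main obstacle to be this last bookkeeping: the coefficients $c_\pm(u)$ involve fractional powers of $\cot(-u/2)=-\cot(u/2)$, and one must track the branches of these powers of negative quantities consistently—this is exactly what the signs inside $F_1$ and $F_2$ (written as powers of $-\sin/(1+\cos)$) are encoding. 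A secondary point requiring care is that the initial conditions must be imposed along the entire curve $u=-\theta$, not at a single point, which is what makes $c_+(u)$ and $c_-(u)$ determined uniquely.
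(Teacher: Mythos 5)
Your proposal is correct, and while it shares the paper's outer skeleton (pass to the EGF system of PDEs coming from Proposition~\ref{prop1}, exploit the characteristic invariant $u(x,t)$, finish with a $2\times 2$ inversion against the data at $t=0$), the decoupling mechanism is genuinely different. The paper looks for $x$-dependent left multipliers: it sets $M=a(x)R^+(x;t)+b(x)R^-(x;t)$, forces $M$ to satisfy the homogeneous transport equation by imposing a first-order ODE system on $(a,b)$, and then simply \emph{asserts} the general solution of that system (this is where $p(x)$ enters), obtaining two invariants $M_{a_j,b_j}=F_j(u)$ pinned down at $t=0$. You instead straighten the characteristics first via $2x=\sec^2\theta$ (your identities $\mathcal{L}u=0$ and $\mathcal{L}\theta=\tan\theta$, hence $\mathcal{L}=\tan\theta\,\partial_\theta$, check out), eliminate $B$ to get the scalar equation $A_{\theta\theta}-\cot\theta\,A_\theta+\tfrac12\csc^2\theta\,A=0$, and \emph{derive} the fundamental solutions $\Phi_\pm=\sin\theta\,\cot^{\pm 1/\sqrt2}(\theta/2)$ from the indicial condition $\lambda^2=\tfrac12$. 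The two routes are exactly dual: writing the paper's first pair in $\theta$-variables, $a_1=\frac{p}{\sqrt2\sin\theta}$, $b_1=\frac{p}{\tan\theta}$ with $p=\cot^{1/\sqrt2}(\theta/2)$, one finds $a_1\Phi_++b_1\Psi_+=0$ and $a_1\Phi_-+b_1\Psi_-=\sqrt2$, so the paper's multiplier pairs $(a_j,b_j)$ are, up to the constant $\sqrt2$, the rows of the inverse of your fundamental matrix, and your coefficients satisfy $c_+(u)=F_2(u)/\sqrt2$, $c_-(u)=F_1(u)/\sqrt2$. What your version buys is that nothing is guessed: $p(x)$ and the exponents $\pm 1/\sqrt2$ are forced by the indicial equation, whereas in the paper they appear only as a solution to be verified; the price is a second-order elimination plus the same formal bookkeeping with fractional powers of negative quantities that the paper also glosses over. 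The final assembly you deferred does close up: with determinant $\Delta(-u)=\sqrt2\,\tan u\,\sin u$, Cramer's rule gives $c_+(u)=\Psi_-(-u)/\Delta(-u)=\tfrac{1}{\sqrt2}F_2(u)$ and $c_-(u)=-\Psi_+(-u)/\Delta(-u)=\tfrac{1}{\sqrt2}F_1(u)$, whence $A=\frac{\sin\theta}{\sqrt2\,p(x)}\left(p^2(x)F_2(u)+F_1(u)\right)$ and $B=\frac{-\tan\theta}{2p(x)}\left(p^2(x)F_2(u)-F_1(u)\right)$; since $\frac{\sin\theta}{\sqrt2}=\frac{\sqrt{2x-1}}{2\sqrt{x}}$ and $\tan\theta=\sqrt{2x-1}$, these are precisely the stated formulas for $R^+(x;t)$ and $R^-(x;t)$.
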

\begin{proof}
Note that $R(x;t)=R^+(x;t)+R^-(x;t)$.
By writing the statement of
Proposition~\ref{prop1} in terms of generating functions
$R^+(x;t)$ and $R^-(x;t)$, we obtain
\begin{align*}
(1-2xt)\frac{d}{dt}R^+(x;t)&=2x(1-2x)\frac{d}{dx}R^+(x;t)+R^+(x;t)+R^-(x;t),\\
(1-2xt)\frac{d}{dt}R^-(x;t)&=2x(1-2x)\frac{d}{dx}R^-(x;t)+xR^+(x;t)+2xR^-(x;t).
\end{align*}
In order to solve this system of partial differential equations,
let us define $$M(x;t)=a(x)R^+(x;t)+b(x)R^-(x;t),$$ where $a(x)$ and
$b(x)$ are functions on $x$. It follows that $M(x;t)$ satisfies
\begin{align*}
(1-2xt)\frac{d}{dt}M(x;t)&=2x(1-2x)\frac{d}{dx}M(x;t)\\
&+\left(a(x)+xb(x)-2x(1-2x)\frac{d}{dx}a(x)\right)R^+(x;t)\\
&+\left(2xb(x)+a(x)-2x(1-2x)\frac{d}{dx}b(x)\right)R^-(x;t).
\end{align*}
Now, let us assume that the functions $a(x)$ and $b(x)$ satisfies
the following system of differential equations:
\begin{align*}
&a(x)+xb(x)-2x(1-2x)\frac{d}{dx}a(x)=0,\\
&2xb(x)+a(x)-2x(1-2x)\frac{d}{dx}b(x)=0.
\end{align*}
The system has the following general solution
$$a(x)=\frac{\sqrt{x}}{\sqrt{2x-1}}(c/p(x)+dp(x)),\quad
b(x)=\frac{1}{\sqrt{2x-1}}(-c/p(x)+dp(x)),$$
where
$$p(x)=\left(\frac{\sqrt{2x}+1}{\sqrt{2x}-1}\right)^{\frac{\sqrt{2}}{4}}.$$
From now, let us assume that the pair of the functions
$(a(x),b(x))$ are either
$(a_1(x),b_1(x))=\left(\frac{\sqrt{x}p(x)}{\sqrt{2x-1}},\frac{p(x)}{\sqrt{2x-1}}\right)$
or
$(a_2(x),b_2(x))=\left(\frac{\sqrt{x}}{\sqrt{2x-1}p(x)},\frac{-1}{\sqrt{2x-1}p(x)}\right)$.
Hence, the generating function $M(x;t)=M_{a_j,b_j}(x;t)$ for a pair
$(a_j(x),b_j(x))$, $j=1,2$, satisfies
\begin{align*}
(1-2xt)\frac{d}{dt}M_{a_j,b_j}(x;t)&=2x(1-2x)\frac{d}{dx}M_{a_j,b_j}(x;t)=0,
\end{align*}
which implies that there exist functions $F_1$ and $F_2$ such
that
$$M_{a_j,b_j}(x;t)=F_j(t\sqrt{2x-1}-\arctan(\sqrt{2x-1})).$$
Since $R^+(x;0)=R^+_0(x)=1$ and $R^-(x;0)=R^-_0(x)=0$, we see that
$M_{a_j,b_j}(x;0)=a_j(x)=F_j(-\arctan(\sqrt{2x-1}))$. Thus,
\begin{align*}
F_1(x)&=-\frac{p((\tan^2(x)+1)/2)}{\sqrt{2}\sin(x)}
=\frac{-1}{\sqrt{2}\sin(x)}\left(\frac{-\sin(x)}{1+\cos(x)}\right)^{\frac{-1}{\sqrt{2}}},\\
F_2(x)&=-\frac{p((\tan^2(x)+1)/2)}{\sqrt{2}\sin(x)}
=\frac{-1}{\sqrt{2}\sin(x)}\left(\frac{-\sin(x)}{1+\cos(x)}\right)^{\frac{1}{\sqrt{2}}}.
\end{align*}
Therefore,
\begin{align*}
a_1(x)R^+(x;t)+b_1(x)R^-(x;t)&=F_1(u(x,t)),\quad a_2(x)R^+(x;t)+b_2(x)R^-(x;t)&=F_2(u(x,t)),
\end{align*}
which implies
\begin{align*}
R^+(x;t)&=\frac{\sqrt{2x-1}}{2\sqrt{x}p(x)}\left(p^2(x)F_2(u(x,t))+F_1(u(x,t))\right),\\
R^-(x;t)&=\frac{-\sqrt{2x-1}}{2p(x)}(p^2(x)F_2(u(x,t))-F_1(u(x,t))).
\end{align*}
This completes the proof.
\end{proof}

Here we give two examples as applications of Theorem~\ref{thRR1}.
\begin{example}
Let $r_n=|\rbb_n|=\sum_{k\geq 0}R(n,k)$.
In order to get the first few terms of $r_n$,
we need to expand the
generating function $R(1;t)$. Let $\alpha=\sqrt{2}$. Then for $x=1$, we get
\begin{align*}
R(1;t)&=\left(\frac{\alpha+1}{\alpha-1}\right)^{-\alpha/4}\frac{1}{\cos t-\sin t}\left(\frac{\sin(t-\pi/4)}{\cos(t-\pi/4)-1}\right)^{\alpha/2}\\
&=\frac{(\cos t-\sin t)^{\alpha/2-1}}{(\alpha-\sin t-\cos t)^{\alpha/2}(1+\alpha)^{\alpha/2}}\\
&=\frac{1}{(\alpha^2-1)^{\alpha/2}}\left(1+\frac{\alpha^2-4\alpha+2}{2(1-\alpha)}t
+\frac{\alpha^4-12\alpha^3+34\alpha^2-32\alpha+12}{8(1-\alpha)^2}t^2+\cdots\right)\\
&=1+2t+7\frac{t^2}{2!}+33\frac{t^3}{3!}+198\frac{t^4}{4!}+1439\frac{t^5}{5!}+12291\frac{t^6}{6!}
+120622\frac{t^7}{7!}+\cdots.
\end{align*}
\end{example}

\begin{example}
Let $s_n=\sum_{k\geq0}kR(n,k)$.
In order to get the first few terms of $s_n$,
we need to expand the
generating function $R'(1;t)=\frac{d}{dx}R(x;t)\mid_{x=1}$. Let $\alpha=\sqrt{2}$. Then by Theorem~\ref{thRR1}, we obtain
\begin{align*}
R'(1;t)&=\left( {\frac{\alpha-1}{\alpha+1}} \right) ^{\alpha/4} \frac{ (3+4t)\cos t +(4t-7)\sin t +4t-2}{4(1-\sin(2t))} \left({\frac {\cos t-\sin t}{\alpha+\sin
 t +\cos t }} \right)^{-\alpha/2}\\
 &+\left( {\frac{\alpha+1}{\alpha-1}} \right) ^{\alpha/4} \frac{\sin t -\cos t}{4(1-\sin(2t))}
 \left( {\frac {\cos t -\sin t}{\alpha+\sin t +\cos t }} \right) ^{\alpha/2}\\
 &=t+6\frac{t^2}{2!}+41\frac{t^3}{3!}+318\frac{t^4}{4!}+2840\frac{t^5}{5!}+28736\frac{t^6}{6!}
 +325991\frac{t^7}{7!}+\cdots.
\end{align*}
\end{example}
\section{On even-signed simsun permutations}\label{Section-3}
\hspace*{\parindent}
Let
\begin{align*}
\rd_n^{+}&=\{\pi\in\rd_n: \pi(1)>0\}, ~\rd_n^{-}=\{\pi\in\rd_n: \pi(1)<0\},\\
\rt_n^{+}&=\{\pi\in\rt_n: \pi(1)>0\}, ~\rt_n^{-}=\{\pi\in\rt_n: \pi(1)<0\}.
\end{align*}

Define
\begin{align*}
D^{+}_n(x)&=\sum_{\pi\in\rd_n^{+}}x^{\des_B(\pi)}=\sum_{k\geq 0} D^{+}(n,k)x^k,~
D^{-}_n(x)=\sum_{\pi\in\rd_n^{-}}x^{\des_B(\pi)}=\sum_{k\geq 0} D^{-}(n,k)x^k,\\
T^{+}_n(x)&=\sum_{\pi\in\rt_n^{+}}x^{\des_B(\pi)}=\sum_{k\geq 0} T^{+}(n,k)x^k,~
T^{-}_n(x)=\sum_{\pi\in\rt_n^{-}}x^{\des_B(\pi)}=\sum_{k\geq 0} T^{-}(n,k)x^k,\\
D_n(x)&=\sum_{\pi\in\rd_n}x^{\des_B(\pi)}=\sum_{k\geq 0} D(n,k)x^k,~
T_n(x)=\sum_{\pi\in\rt_n}x^{\des_B(\pi)}=\sum_{k\geq 0} T(n,k)x^k.
\end{align*}
Clearly,
$R^+(n,k)=D^{+}(n,k)+T^{+}(n,k),
R^-(n,k)=D^{-}(n,k)+T^{-}(n,k)$ and
$R(n,k)=D(n,k)+T(n,k)$.

\begin{lemma}\label{lemma2}
We have
\begin{align*}
D^{+}(n,k)&=kR^+(n-1,k)+(n-2k+1)R^+(n-1,k-1)+D(n-1,k);\\
T^{+}(n,k)&=kR^+(n-1,k)+(n-2k+1)R^+(n-1,k-1)+T(n-1,k);\\
D^{-}(n,k)&=kR^-(n-1,k)+(n-2k+2)R^{-}(n-1,k-1)+T^{+}(n-1,k-1);\\
T^{-}(n,k)&=kR^-(n-1,k)+(n-2k+2)R^-(n-1,k-1)+{D}^{+}(n-1,k-1);\\
D(n,k)&=(1+k)D(n-1,k)+(n-2k+1)D(n-1,k-1)+kT(n-1,k)+\\
&(n-2k+2)T(n-1,k-1)+D^{-1}(n-1.k-1);\\
T(n,k)&=(1+k)T(n-1,k)+(n-2k+1)T(n-1,k-1)+k{D}(n-1,k)+\\
&(n-2k+2){D}(n-1,k-1)+T^{-1}(n-1,k-1).
\end{align*}
\end{lemma}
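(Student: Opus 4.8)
The plan is to refine, by a parity bookkeeping, the insertion argument already used in the proof of Lemma~\ref{lemma1}. That argument rests on the fact that every $\pi'\in\rbb_n$ arises in exactly one way from a permutation $\pi\in\rbb_{n-1}$ by inserting the letter $n$ or the letter $-n$ into an admissible slot, and that for each of the three source types the admissible slots are precisely those enumerated there. The single new ingredient I need is the observation that inserting the positive letter $n$ does not change the number of negative entries, and hence sends $\rd_{n-1}\to\rd_n$ and $\rt_{n-1}\to\rt_n$, whereas inserting the negative letter $-n$ flips the parity, and hence sends $\rd_{n-1}\to\rt_n$ and $\rt_{n-1}\to\rd_n$. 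Consequently, to build a permutation in $\rd_n$ I must use the letter $n$ on an even source and the letter $-n$ on an odd source, and symmetrically for $\rt_n$.

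First I would revisit each of the cases (i)--(iii) in the proof of Lemma~\ref{lemma1} and record how each aggregated slot count splits between the two signs. In case (i) of the $+$ family, the $2k+1$ slots consist of $k$ positive slots (the letter $n$ after each of the $k$ descents), $k$ negative slots (the letter $-n$ after each descent), and one further positive slot (the letter $n$ at the very end), i.e.\ $k+1$ positive against $k$ negative; in case (ii) the $2n-4k+2$ slots split evenly into $n-2k+1$ positive and $n-2k+1$ negative; and case (iii) uses the single positive slot just after $\pi(0)$. Feeding the parity rule into these refined counts yields, for the target $\rd_n^+$, a contribution $(k+1)D^+(n-1,k)+kT^+(n-1,k)$ from case (i), $(n-2k+1)\big(D^+(n-1,k-1)+T^+(n-1,k-1)\big)$ from case (ii), and $D^-(n-1,k)$ from case (iii). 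Using $R^\pm=D^\pm+T^\pm$, these collapse exactly to
\[
D^+(n,k)=kR^+(n-1,k)+(n-2k+1)R^+(n-1,k-1)+D(n-1,k),
\]
and the same refinement applied to the $R^-$-analysis (where the first descent now sits at $\pi(0)$, so the slot just after it is forbidden for the letter $n$ but allowed for $-n$) produces the stated recurrences for $T^+$, $D^-$ and $T^-$.

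Finally, the two recurrences for $D(n,k)$ and $T(n,k)$ follow by adding the $+$ and $-$ recurrences just obtained and repeatedly substituting $R^\pm=D^\pm+T^\pm$ together with $R=D+T$; a short bookkeeping check shows that the mixed terms regroup into $(1+k)D(n-1,k)+(n-2k+1)D(n-1,k-1)+kT(n-1,k)+(n-2k+2)T(n-1,k-1)+D^-(n-1,k-1)$ and its $D\leftrightarrow T$ mirror. The main obstacle will be the second step: pinning down the sign-refined slot counts exactly, since the positive and negative letters are subject to different no-double-descent and first-descent restrictions (an asymmetry already visible in Lemma~\ref{lemma1}, where case (i) of the $R^-$ recurrence excludes the first descent for the letter $n$ but not for $-n$). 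Once those counts are established, every displayed recurrence is a mechanical consequence of the parity rule.
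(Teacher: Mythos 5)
Your proposal is correct and follows essentially the same route as the paper: the paper's proof of the first recurrence is exactly your parity-refined insertion argument (positive insertions preserve the sign-parity, negative insertions flip it), split into the five resulting cases and then collapsed via $R^{\pm}=D^{\pm}+T^{\pm}$, with the remaining recurrences handled "in the same way." Your derivation of the $D(n,k)$ and $T(n,k)$ recurrences by summing the $\pm$ recurrences is a harmless variant of the paper's uniform treatment, and your slot-count splits match the paper's case analysis exactly.
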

\begin{proof}
Define
$$\rd^+_{n,k}=\{\pi\in\rd^+_n: \des(\pi)=k\},~\rd^-_{n,k}=\{\pi\in\rd^-_n: \des(\pi)=k\};$$
$$\rt^+_{n,k}=\{\pi\in\rt^+_n: \des(\pi)=k\},\rt^-_{n,k}=\{\pi\in\rt^-_n: \des(\pi)=k\}.$$
We only prove the first recurrence relation and the others can be proved in the same way.

In order to get a permutation $\pi'\in\rd^+_{n,k}$ from a permutation $\pi\in\rbb_{n-1}$,,
we distinguish among the following cases:
\begin{enumerate}
\item [(i)] If $\pi\in \rd^+_{n-1,k}$, then we can insert the entry $n$ right after any descent. Moreover, we can also put $n$ at the end of $\pi$. This accounts for $(k+1)D^+(n-1,k)$ possibilities.
\item [(ii)] If $\pi\in \rd^+_{n-1,k-1}$, we cannot insert $n$ immediately before or right after any descent, and we cannot put $n$ at the end of $\pi$. Thus $n$ can be inserted into the remaining $n-1-2(k-1)=n-2k+1$ positions. This accounts for $(n-2k+1)D^+(n-1,k-1)$ possibilities.
\item [(iii)] If $\pi\in \rd^-_{n-1,k}$, then we have to insert $n$ right after $\pi(0)$. This accounts for $D^-(n-1,k)$ possibilities.
\item [(iv)] If $\pi\in \rt^+_{n-1,k}$, then we can insert $-n$ right after any descent. This accounts for $kT^+(n-1,k)$ possibilities.
\item [(v)] If $\pi\in \rt^+_{n-1,k-1}$, then we cannot insert $-n$ immediately before or right after any descent bottom. Moreover, we cannot put $-n$ right after $\pi(0)$. Thus the entry $-n$ can be inserted into the remaining $n-1-2(k-1)=n-2k+1$ positions. This accounts for $(n-2k+1)T^+(n-1,k-1)$ possibilities.
 \end{enumerate}

Therefore,
\begin{align*}
D^{+}(n,k)&=(k+1)D^+(n-1,k)+(n-2k+1)D^+(n-1,k-1)+D^-(n-1,k)+\\
&kT^+(n-1,k)+(n-2k+1)T^+(n-1,k-1)\\
&=k(D^+(n-1,k)+T^+(n-1,k))+(n-2k+1)(D^+(n-1,k-1)+T^+(n-1,k-1))\\
&+D^+(n-1,k)+D^-(n-1,k)\\
&=kR^+(n-1,k)+(n-2k+1)R^+(n-1,k-1)+D(n-1,k),
\end{align*}
and this completes the proof.
\end{proof}

By Lemma~\ref{lemma2}, we immediately get the following proposition.
\begin{prop}\label{prop2}
Set $D_{0}^+(x)=D_0(x)=1,D_{0}^-(x)=T_0(x)=T_{0}^-(x)=T_{0}^+(x)=0$.
Then for $n\geq 0$, we have the following recurrence relations:
\begin{align*}
D^{+}_{n+1}(x)&=nxR^{+}_n(x)+x(1-2x)\frac{d}{dx}R^{+}_n(x)+D_{n}(x);\\
T^{+}_{n+1}(x)&=nxR^{+}_n(x)+x(1-2x)\frac{d}{dx}R^{+}_n(x)+T_{n}(x);\\
D^{-}_{n+1}(x)&=(n+1)xR^{-}_n(x)+x(1-2x)\frac{d}{dx}R^{-}_n(x)+xT_{n}^+(x);\\
T^{-}_{n+1}(x)&=(n+1)xR^{-}_n(x)+x(1-2x)\frac{d}{dx}R^{-}_n(x)+x{D}_{n}^+(x);\\
D_{n+1}(x)&=(1+nx)D_n(x)+(n+1)xT_n(x)+x(1-2x)\frac{d}{dx}R_n(x)+xD^{-}_{n}(x);\\
T_{n+1}(x)&=(1+nx)T_n(x)+(n+1)x{D}_n(x)+x(1-2x)\frac{d}{dx}R_n(x)+xT^{-}_{n}(x).
\end{align*}
\end{prop}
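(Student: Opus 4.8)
The plan is to read off each recurrence of Proposition~\ref{prop2} from the corresponding coefficient identity in Lemma~\ref{lemma2} by multiplying through by $x^k$ and summing over $k\ge 0$. Since the six relations to be proved govern the level-$(n+1)$ polynomials, I would first shift the index in Lemma~\ref{lemma2}, replacing $n$ by $n+1$ throughout; for example the first identity becomes $D^+(n+1,k)=kR^+(n,k)+(n-2k+2)R^+(n,k-1)+D(n,k)$, and likewise the weights $(n-2k+1)$ and $(n-2k+2)$ occurring elsewhere become $(n-2k+2)$ and $(n-2k+3)$.

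Two elementary generating-function identities carry the whole argument. First, for a sequence with polynomial $A_n(x)=\sum_k a(n,k)x^k$ one has $\sum_k k\,a(n,k)x^k=x\tfrac{d}{dx}A_n(x)$. Second, combining the index shift $j=k-1$ with a linear weight gives
\[
\sum_{k}(n-2k+c)\,a(n,k-1)\,x^k = x\bigl((n-2+c)A_n(x)-2x A_n'(x)\bigr).
\]
Applying these to the shifted first identity, the term $kR^+(n,k)$ contributes $x\tfrac{d}{dx}R^+_n(x)$, the term $(n-2k+2)R^+(n,k-1)$ (so $c=2$) contributes $nxR^+_n(x)-2x^2\tfrac{d}{dx}R^+_n(x)$, and $D(n,k)$ contributes $D_n(x)$; collecting the two derivative contributions as $x(1-2x)\tfrac{d}{dx}R^+_n(x)$ yields the first stated recurrence. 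The recurrences for $T^+_{n+1}$, $D^-_{n+1}$ and $T^-_{n+1}$ come out the same way, with the only change being which level-$n$ polynomial plays the role of the last summand.

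The observation that streamlines the last two relations (and tidies the pairing in the others) is that $R^+_n=D^+_n+T^+_n$, $R^-_n=D^-_n+T^-_n$ and $R_n=D_n+T_n$, so several derivative-producing terms can be merged into a single $\tfrac{d}{dx}R_n$ (respectively $\tfrac{d}{dx}R^{\pm}_n$). Concretely, in the shifted identity for $D(n+1,k)$ the weights $(1+k)D(n,k)$, $(n-2k+2)D(n,k-1)$, $kT(n,k)$ and $(n-2k+3)T(n,k-1)$ produce $(1+nx)D_n(x)$, $(n+1)xT_n(x)$ and $x\,D_n'+x\,T_n'-2x^2D_n'-2x^2T_n'=x(1-2x)\tfrac{d}{dx}R_n(x)$, while the final summand $D^-(n,k-1)$ gives $x\,D^-_n(x)$; this is exactly the claimed recurrence, and the relation for $T_{n+1}$ is symmetric.

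The computation is routine summation, so there is no genuine obstacle beyond careful bookkeeping; the one place demanding attention is tracking the additive constants inside $n-2k+c$ after the substitution $n\to n+1$, since it is precisely these shifts that decide whether the non-derivative coefficient of $R_n$ emerges as $nx$ or $(n+1)x$. Finally I would note that the $n=0$ instances of the recurrences, fed by the stated initial data $D^+_0(x)=D_0(x)=1$ and $D^-_0(x)=T_0(x)=T^-_0(x)=T^+_0(x)=0$, reproduce the correct level-$1$ polynomials, confirming that Lemma~\ref{lemma2} applies uniformly across the range claimed.
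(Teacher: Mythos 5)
Your proposal is correct and follows exactly the paper's route: the paper derives Proposition~\ref{prop2} directly from Lemma~\ref{lemma2} (stating it is immediate), and your computation---shifting $n\mapsto n+1$, multiplying by $x^k$, summing over $k$, and using $\sum_k k\,a(n,k)x^k = x\frac{d}{dx}A_n(x)$ together with the merging $R_n=D_n+T_n$ (and its $\pm$ analogues) to collect derivative terms---is precisely the routine translation the paper leaves implicit. Your bookkeeping of the constants in $n-2k+c$ under the index shift, which determines whether $nx$ or $(n+1)x$ appears, is exactly right.
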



The {\it Chebyshev polynomials of the second kind} may be recursively defined by
$U_0(x)=1,U_1(x)=2x$ and $U_{n+1}(x)=2xU_n(x)-U_{n-1}(x)$ for $n\geq 1$.
A well known explicit formula is the following:
$$U_n(x)=\sum_{k=0}^{\lrf{n/2}}(-1)^k\binom{n-k}{k}(2x)^{n-2k}.$$

Now we present the second main result of this paper.
\begin{theorem}\label{thm02}
For $n\geq 0$, we have
\begin{equation}\label{fibonacc}
D_n(x)-T_n(x)=D^+_{n+1}(x)-T^+_{n+1}(x)=\frac{1}{x}\left(T^-_{n+2}(x)-D^-_{n+2}(x)\right)=x^{\frac{n+1}{2}}U_{n+1}\left(\frac{1}{2\sqrt{x}}\right).
\end{equation}
\end{theorem}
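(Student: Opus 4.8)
The plan is to peel the three leftmost equalities directly off Proposition~\ref{prop2} and then collapse everything to a single second-order recurrence that matches the Chebyshev recursion. First I would subtract the recurrence for $T^+_{n+1}(x)$ from that for $D^+_{n+1}(x)$: the two differ only in their final summand ($D_n(x)$ versus $T_n(x)$) while every other term is literally identical, so
\[
D^+_{n+1}(x)-T^+_{n+1}(x)=D_n(x)-T_n(x),
\]
which is the first equality. Subtracting the recurrence for $T^-_{n+1}(x)$ from that for $D^-_{n+1}(x)$ annihilates the two $R^-_n$ terms and leaves
\[
D^-_{n+1}(x)-T^-_{n+1}(x)=x\bigl(T^+_n(x)-D^+_n(x)\bigr)=-x\bigl(D^+_n(x)-T^+_n(x)\bigr).
\]
Taking $m=n+1$ in the rearranged form $T^-_{m+1}(x)-D^-_{m+1}(x)=x\bigl(D^+_m(x)-T^+_m(x)\bigr)$ and dividing by $x$ yields $\tfrac1x\bigl(T^-_{n+2}(x)-D^-_{n+2}(x)\bigr)=D^+_{n+1}(x)-T^+_{n+1}(x)$, the third member of the chain.

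For the closed form I would abbreviate $\Delta_n=D_n(x)-T_n(x)$, $\Delta^+_n=D^+_n(x)-T^+_n(x)$, $\Delta^-_n=D^-_n(x)-T^-_n(x)$. The two displays above read $\Delta^+_{n+1}=\Delta_n$ and $\Delta^-_{n+1}=-x\Delta^+_n$, so for $n\geq 2$ we have $\Delta^+_n=\Delta_{n-1}$ and $\Delta^-_n=-x\Delta^+_{n-1}=-x\Delta_{n-2}$. Since $D_n=D^+_n+D^-_n$ and $T_n=T^+_n+T^-_n$ give $\Delta_n=\Delta^+_n+\Delta^-_n$, substituting produces the clean recurrence
\[
\Delta_n(x)=\Delta_{n-1}(x)-x\,\Delta_{n-2}(x)\qquad(n\geq 2).
\]
The Proposition~\ref{prop2} initial data $D_0=1,\,T_0=0$, together with a one-step evaluation of the recurrences giving $D_1=1,\,T_1=x$, furnish the base cases $\Delta_0=1$ and $\Delta_1=1-x$.

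Finally I would check that $g_n(x)=x^{(n+1)/2}U_{n+1}\!\left(\tfrac{1}{2\sqrt{x}}\right)$ satisfies the very same recurrence and base cases, so that $\Delta_n=g_n$ by induction. Writing $y=\tfrac{1}{2\sqrt{x}}$, whence $2y=x^{-1/2}$, the Chebyshev relation $U_{n+1}(y)=2yU_n(y)-U_{n-1}(y)$ multiplied by $x^{(n+1)/2}$ becomes $g_n=x^{n/2}U_n(y)-x^{(n+1)/2}U_{n-1}(y)=g_{n-1}-x\,g_{n-2}$, while $U_1(y)=2y$ and $U_2(y)=4y^2-1$ give $g_0=1$ and $g_1=1-x$. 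The only step demanding real care is the bookkeeping of the half-integer powers of $x$ in this substitution together with the index shifts when passing between $\Delta^\pm$ and $\Delta$; once the recurrence $\Delta_n=\Delta_{n-1}-x\Delta_{n-2}$ is secured the identification with $U_{n+1}$ is forced, and expanding $U_{n+1}$ via its explicit formula immediately recovers the coefficient identity~\eqref{Dnk}.
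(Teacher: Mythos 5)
Your proof is correct, and while its first half coincides with the paper's, the key step is genuinely different and cleaner. Both you and the paper extract the chain $D_n-T_n=D^+_{n+1}-T^+_{n+1}=\frac{1}{x}\bigl(T^-_{n+2}(x)-D^-_{n+2}(x)\bigr)$ by subtracting the paired recurrences in Proposition~\ref{prop2}. The divergence is in how the closed form is established. The paper works with the mixed recurrence $D_n(x)-T_n(x)=(1-x)(D_{n-1}(x)-T_{n-1}(x))-x(T^-_{n-1}(x)-D^-_{n-1}(x))$, expands $x^{(n+1)/2}U_{n+1}\!\left(\tfrac{1}{2\sqrt{x}}\right)$ as the binomial sum $\sum_{k\geq 0}(-1)^k\binom{n-k+1}{k}x^k$, and closes the induction by a somewhat delicate manipulation of binomial coefficients (including intermediate expressions with $k-1$ in a denominator). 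You instead combine $\Delta^+_n=\Delta_{n-1}$ and $\Delta^-_n=-x\Delta_{n-2}$ with $\Delta_n=\Delta^+_n+\Delta^-_n$ to obtain the autonomous second-order recurrence
\begin{equation*}
\Delta_n(x)=\Delta_{n-1}(x)-x\,\Delta_{n-2}(x)\qquad(n\geq 2),
\end{equation*}
and then observe that $g_n(x)=x^{(n+1)/2}U_{n+1}\!\left(\tfrac{1}{2\sqrt{x}}\right)$ satisfies exactly the same recurrence because it is the Chebyshev three-term relation rescaled, with matching base cases $\Delta_0=g_0=1$ and $\Delta_1=g_1=1-x$. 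This buys a proof with no binomial-coefficient gymnastics at all, and it explains conceptually \emph{why} Chebyshev polynomials appear: the substitution $y=\tfrac{1}{2\sqrt{x}}$ turns $U_{n+1}(y)=2yU_n(y)-U_{n-1}(y)$ into your recurrence. What the paper's route buys in exchange is that the explicit coefficient formula $\sum_k(-1)^k\binom{n-k+1}{k}x^k$ (hence the identity~\eqref{Dnk}) is produced inside the induction rather than read off afterwards from the known expansion of $U_{n+1}$, as you do at the end.
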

\begin{proof}
Using Proposition~\ref{prop2}, we get
$$D_n(x)-T_n(x)=D^+_{n+1}(x)-T^+_{n+1}(x)=\frac{1}{x}\left(T^-_{n+2}(x)-D^-_{n+2}(x)\right),$$
and
$D_{n}(x)-T_{n}(x)=(1-x)(D_{n-1}(x)-T_{n-1}(x))-x(T^-_{n-1}(x)-D^-_{n-1}(x))$.
Note that
$$x^{\frac{n+1}{2}}U_{n+1}\left(\frac{1}{2\sqrt{x}}\right)=\sum_{k\geq 0}(-1)^k\binom{n-k+1}{k}x^k.$$
If $n\leq 3$, the equality~\eqref{fibonacc} is obvious, so we proceed to the inductive step. Assume that $n\geq 4$ and that the
equality holds for $m\leq n-1$.
Then
\begin{align*}
D_{m+1}(x)-T_{m+1}(x)&=(1-x)(D_{m}(x)-T_{m}(x))-x(T^-_{m}(x)-D^-_{m}(x))\\
&=(1-x)\sum_{k\geq 0}(-1)^k\binom{m-k+1}{k}x^k-x^2\sum_{k\geq 0}(-1)^k\binom{m-k-1}{k}x^k\\
&=\sum_{k\geq 0}(-1)^k\left(\binom{m-k+1}{k}+\binom{m-k+2}{k-1}-\binom{m-k+1}{k-2}\right)x^k\\
&=\sum_{k\geq 0}(-1)^k\left(\binom{m-k+1}{k}+\frac{m-2k+3}{k-1}\binom{m-k+1}{k-2}\right)x^k\\
&=\sum_{k\geq 0}(-1)^k\frac{(m-k+1)!}{k!(m-2k+2)!}\left(m-k+2\right)x^k\\
&=\sum_{k\geq 0}(-1)^k\binom{m-k+2}{k}x^k,
\end{align*}
as desired.
\end{proof}

Combining~\eqref{fibonacc} and the fact that $R_n(x)=D_n(x)+T_n(x)$, so the following is immediate.
\begin{cor}
We have
\begin{align*}
D(n,k)&=\frac{1}{2}R(n,k)+\frac{1}{2}\binom{n-k+1}{k}(-1)^k;\\
T(n,k)&=\frac{1}{2}R(n,k)-\frac{1}{2}\binom{n-k+1}{k}(-1)^k.
\end{align*}
\end{cor}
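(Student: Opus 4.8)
The plan is to reduce the two claimed identities to a single scalar relation supplied by Theorem~\ref{thm02} together with the obvious additivity $R(n,k)=D(n,k)+T(n,k)$, and then solve a linear system. First I would read off from~\eqref{fibonacc} the polynomial identity $D_n(x)-T_n(x)=x^{(n+1)/2}U_{n+1}(1/(2\sqrt{x}))$ and combine it with the expansion already recorded in the proof of Theorem~\ref{thm02},
\[
x^{\frac{n+1}{2}}U_{n+1}\left(\frac{1}{2\sqrt{x}}\right)=\sum_{k\geq 0}(-1)^k\binom{n-k+1}{k}x^k .
\]
Since $D_n(x)=\sum_{k\ge 0}D(n,k)x^k$ and $T_n(x)=\sum_{k\ge 0}T(n,k)x^k$ by definition, comparing the coefficient of $x^k$ on both sides of $D_n(x)-T_n(x)=\sum_{k\geq 0}(-1)^k\binom{n-k+1}{k}x^k$ yields the scalar relation $D(n,k)-T(n,k)=(-1)^k\binom{n-k+1}{k}$, which is exactly~\eqref{Dnk}.

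Second, I would invoke $R_n(x)=D_n(x)+T_n(x)$ in its coefficientwise form $R(n,k)=D(n,k)+T(n,k)$. Pairing this with the relation from the previous step produces the $2\times 2$ system
\begin{align*}
D(n,k)+T(n,k)&=R(n,k),\\
D(n,k)-T(n,k)&=(-1)^k\binom{n-k+1}{k}.
\end{align*}
Adding the two equations and dividing by $2$ isolates $D(n,k)$, while subtracting the second equation from the first and dividing by $2$ isolates $T(n,k)$; this delivers precisely the two formulas in the statement.

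There is essentially no obstacle here, as the substantive work is entirely carried by Theorem~\ref{thm02}. The only point meriting a moment's attention is the passage from the polynomial identity to the scalar one: one must note that $D_n(x)-T_n(x)$ and $R_n(x)$ are by definition the generating polynomials whose $x^k$-coefficients are $D(n,k)-T(n,k)$ and $R(n,k)$ respectively, so that comparison of coefficients is legitimate term by term. Once this is observed, the remainder is the routine inversion of the linear system above, and the corollary follows immediately.
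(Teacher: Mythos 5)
Your proof is correct and is exactly the paper's argument: the paper derives the corollary by combining the identity~\eqref{fibonacc} (whose coefficient expansion $\sum_{k\geq 0}(-1)^k\binom{n-k+1}{k}x^k$ is recorded in the proof of Theorem~\ref{thm02}) with $R_n(x)=D_n(x)+T_n(x)$, then solving the resulting $2\times 2$ system coefficientwise, just as you do. The only difference is that the paper compresses this into the word ``immediate,'' while you spell out the coefficient comparison and the linear algebra.
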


The {\it Fibonacci sequence} is defined recursively by $F_0=0, F_1=1$ and $F_{n}=F_{n-1}+F_{n-2}$ for $n\geq 2$ (see~\cite[A000045]{Sloane}).
A well known sum formula for $F_n$ is the following:
$$F_{n+1}=\sum_{k=0}^{\lrf{n/2}}\binom{n-k}{k}.$$
As a special case of~\eqref{fibonacc}, we get the following.
\begin{cor}
Let $d_n=D_n(-1)$ and $t_n=T_n(-1)$. We have
$d_n-t_n=F_{n+2}$.
\end{cor}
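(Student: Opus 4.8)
The plan is to obtain this corollary as an immediate specialization of Theorem~\ref{thm02} at the value $x=-1$, followed by recognition of the resulting binomial sum as a Fibonacci number. Theorem~\ref{thm02} records the polynomial identity
\[
D_n(x)-T_n(x)=x^{\frac{n+1}{2}}U_{n+1}\left(\frac{1}{2\sqrt{x}}\right)=\sum_{k\geq 0}(-1)^k\binom{n-k+1}{k}x^k,
\]
so that $d_n-t_n=D_n(-1)-T_n(-1)$ is read off by substituting $x=-1$ into the rightmost expression, where the closed form in terms of binomial coefficients is the form I would work with.

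First I would carry out the substitution $x=-1$. In the general coefficient the sign $(-1)^k$ already present in the summand meets the factor $x^k=(-1)^k$, and since $(-1)^k(-1)^k=1$ these signs cancel term by term. This yields
\[
d_n-t_n=\sum_{k\geq 0}(-1)^k\binom{n-k+1}{k}(-1)^k=\sum_{k\geq 0}\binom{n-k+1}{k}.
\]
The key point of this step is simply that evaluating the alternating polynomial at $x=-1$ turns it into a sum of positive binomial coefficients.

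Next I would identify this sum as a Fibonacci number via the sum formula $F_{n+1}=\sum_{k=0}^{\lrf{n/2}}\binom{n-k}{k}$ stated just above the corollary. Replacing $n$ by $n+1$ there gives
\[
F_{n+2}=\sum_{k=0}^{\lrf{(n+1)/2}}\binom{n+1-k}{k}=\sum_{k\geq 0}\binom{n-k+1}{k},
\]
where extending the upper limit to infinity is harmless because $\binom{n-k+1}{k}$ vanishes once $2k>n+1$. Combining this with the previous display produces $d_n-t_n=F_{n+2}$, as claimed.

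Since every step is either a routine specialization or an appeal to an identity already recorded in the excerpt, I do not anticipate a genuine obstacle. The only point demanding a little care is the bookkeeping of the index shift, namely confirming that $\sum_{k\geq 0}\binom{n-k+1}{k}$ matches $F_{n+2}$ rather than a neighboring Fibonacci number; I would pin this down by checking a base case, for instance $n=0$, where the sum equals $\binom{1}{0}=1=F_2$, verifying the alignment.
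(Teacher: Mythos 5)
Your proof is correct and is essentially the paper's own argument: the paper presents this corollary precisely as a special case of the identity in Theorem~\ref{thm02}, namely substituting $x=-1$ into the expansion $\sum_{k\geq 0}(-1)^k\binom{n-k+1}{k}x^k$ so the signs cancel, and then applying the Fibonacci sum formula $F_{n+1}=\sum_{k=0}^{\lrf{n/2}}\binom{n-k}{k}$ (shifted by one) that is stated immediately before the corollary. Your extra care about the index shift and the vanishing of terms with $2k>n+1$ is sound and only makes the implicit argument explicit.
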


For convenience, we end this paper by providing the first few terms of the polynomials discussed in this section:
\begin{align*}
D^{+}_1(x)& =1,D^{+}_2(x)=1+x,D^{+}_3(x)=1+7x,D^{+}_4(x)=1+29x+21x^2;\\
T^{+}_1(x)& =0,T^{+}_2(x)=2x,T^{+}_3(x)=9x,T^{+}_4(x)=32x+20x^2;\\
D^{-}_1(x)& =0,D^{-}_2(x) =x,D^{-}_3(x)=3x+5x^2,D^{-}_4(x)=7x+41x^2;\\
T^{-}_1(x)& =x,T^{-}_2(x) =2x,T^{-}_3(x)=4x+4x^2,T^{-}_4(x)=8x+39x^2;\\
D_1(x)& =1,D_2(x)=1+2x,D_3(x)=1+10x+5x^2,D_4(x)=1+36x+62x^2;\\
T_1(x)& =x,T_2(x)=4x,T_3(x)=13x+4x^2,T_4(x)=40x+59x^2.
\end{align*}

\end{document}